\documentclass[runningheads,envcountsame,a4paper]{llncs} 

\usepackage{amssymb,amsmath,mathtools}
\usepackage{wasysym}
\usepackage{graphicx}
\usepackage{epsfig}
\usepackage{latexsym}
\usepackage[english]{babel}
\usepackage[utf8]{inputenc}
\usepackage{csquotes}
\usepackage{algorithm} 
\usepackage{algpseudocode}
\usepackage{tikz}
\usetikzlibrary{arrows,shapes.geometric,positioning,calc,cd}
\usepackage[colorinlistoftodos]{todonotes}
\usepackage{changepage}
\usepackage{subfig}
\usepackage{soul}
\usepackage{hyperref}
\usepackage{todonotes}
\usepackage{enumerate}

\newtheorem{thm}{Theorem}
\newtheorem{prop}[thm]{Proposition}
\newtheorem{cor}[thm]{Corollary}
\newtheorem{lem}[thm]{Lemma}

\newtheorem{exa}[thm]{Example}

\newsavebox{\qedB}

\sbox{\qedB}{\setlength{\unitlength}{1mm}
 \begin{picture}(4,4)(0,0)
  \thinlines
  {\put(0,0){\framebox(2.83,2.83){}}}%
  {\put(1.17,1.17){\framebox(2.83,2.83){}}}%
  {\put(0,0){\framebox(4,4){}}}%
  {\put(1.17,1.17){{\rule{1ex}{1ex} }}}%
 \end{picture}}
 
\newcommand{\QEDB}{\ifmmode\def\next{\tag"\usebox{\qedB}"}%
 \else\let\next=\relax
 {\unskip\nobreak\hfil\penalty50
 \hskip2em\hbox{}\nobreak\hfil\usebox{\qedB}
 \parfillskip=0pt \finalhyphendemerits=0\penalty-100\bigskip}\fi\next}

\newcommand{\Alphabet}{\hbox{\rm Alph}}

\newcommand{\fac}{\hbox{\rm Fac}}

\newcommand{\N}{\mathbb N}

\newcommand{\bprop}{\begin{prop}}
\newcommand{\eprop}{\end{prop}}
\newcommand{\bcor}{\begin{cor}}
\newcommand{\ecor}{\end{cor}}
\newcommand{\blem}{\begin{lem}}
\newcommand{\elem}{\end{lem}}

\newcommand{\cl}{\mathrm{Class}}
\newcommand{\pri}{\mathrm{Prim}}

\newcommand{\qcl}{\mathbb{Q}\mathrm{Class}}

\newcommand{\QQ}{\mathbb{Q}}
\newcommand{\ZZ}{\mathbb{Z}}

\newcommand{\rp}{\mathrm{RP}}

\title{On the number of rational power factors in a finite word}
\toctitle{Square complexity}
\authorrunning{S. Li, Y. Song}
\tocauthor{Shuo Li, Yuan Song}
\author{S. Li\inst{1} \and Y. Song\inst{2}}

\institute{Hangzhou International Innovation Institute of Beihang University, \\Sino-French Laboratory for Mathematics\\
\email{shuoli@buaa.edu.cn}
\and
LMIB-School of Mathematical Sciences, Beihang University, Beijing\\
\email{yuan.song@buaa.edu.cn}
 }

\begin{document}
\maketitle


\begin{abstract}
Let $w$ be a finite word of length $n$. In this paper, we study the maximum possible number of distinct rational power factors in a finite word. A rational power is a word of the form $u=p^kp'$, where $p$ is a nonempty finite word, $k$ is an integer larger than $1$, $p^k$ is a concatenation of $k$ copies of $p$ and $p'$ is a prefix of $p$. The rational powers can be recognized as a generalization of $k$-powers, and it is proved in~\cite{kpower} that, the number $C_k(w)$ of distinct $k$-powers in $w$ satisfies $C_k(w) \leq \frac{n-1}{k-1}$. However, the number of rational powers has not been studied in the literature. In this article, we prove that the number $\rp(w)$ of distinct rational power factors of $w$ satisfies
\[
 \rp(w)\le
\frac18n^2+O(n).
\]
We also illustrate a novel approach to study pattern-counting problems: using a graph-theoretic representation of words and a few word equations, we transform the traditional pattern-counting problems into a constrained extremal problem.  
\end{abstract}

\section{Introduction}

 The study of repetitions is a long-standing and active topic in combinatorics on words. The first result concerning repetitions can be dated back to Thue~\cite{Thue1906}, who proved that, the simplest repetition - the square words, i.e. the words of the form $w=vv$, can be avoided in infinite words of three letters. 

Counting the number of repetitions in finite words was initiated by Fraenkel and Simpson, who proved that the number of distinct square factors in a word of length $n$ is at most $2n$ and conjectured that $C_2(w)\le n$, see~\cite{FraenkelS98}. This conjecture was proved by Brlek and the first author in~\cite{sq}.
Similar questions have also been studied for $k$-powers. Crochemore et al. proved in ~\cite{CMFS10} that for a fixed integer $k\ge 3$, the number of distinct $k$-powers in a word of length $n$ is at most $\frac{n}{k-2}$. Pachocki, Radoszewski and the first author proved in \cite{kpower} that for a fixed integer \(k\ge2\), the maximum number of distinct nonempty \(k\)-power factors in a word of length
\(n\) is between
\[
        \frac{n}{k-1}-\Theta(\sqrt n)
        \quad\text{and}\quad
        \frac{n-1}{k-1}.
\]

These results show that the counting function of $k$-powers increases linearly in terms of the length of the word. An easy observation shows that the number of rational powers can increase quadratically in terms of length, see for example the words of the form $(ab)^k$. 

Another related work is about closed-word countings. In~\cite{OS24}, Olga and Svetlana proved that the number $C_{closed}(w)$ of closed words, i.e. the words of the form $aba$, where $a$ is a nonempty word, in $w$, satisfies that $C_{closed}(w)\leq \frac{n^2}{6}$ and that this upper bound is sharp. Since all rational powers are closed words, we have naturally the upper bound $\rp(w) \leq \frac{n^2}{6}$. The strategy used in~\cite{OS24} is to consider how many new closed words can be created by adding one single letter at the end of a string. The same strategy has also been applied by Fraenkel and Simpson in~\cite{FraenkelS98} to prove $C_2(w) \leq 2n$. In \cite{sq} and \cite{kpower}, to give a tighter upper bound to $C_2(w)$ as well as $C_k(w)$, the first author and his co-authors used a graph-theoretic method and constructed an injection from $k$-powers in $w$ to the small circuits in the Rauzy graphs of $w$. In this work, using the same correspondence between $k$-powers and small circuits and a few more word equations, we transform the rational-power-counting problems into a constrained extremal problem and prove that $\rp(w) \leq \frac{n^2}{8}$. 

The paper is organized as follows. 
Section~\ref{sec2} recalls basic terminology on words, primitive words, conjugacy classes, rational powers, and Rauzy graphs. 
Section~\ref{sec3} gives some basic facts of rational power factors. 
Section~\ref{sec4} proves the local estimates for one primitive conjugacy class, establishes the restrictions on all the classes, and completes the optimization leading to the main theorem.

\section{Preliminaries}\label{sec2}

Let us recall the basic terminology about words from Lothaire \cite{lothaire3}. A {\em word} is a finite sequence   $w = w_1 w_2 \cdots w_{n}$ of \emph{letters} or symbols. The {\em length} $|w|$ of $w$ is $n$ and $w_i$ is the letter in  {\em position} $i$. 
The {\em concatenation of $w= w_1 w_2 \cdots w_{n}$ and $v= v_1 v_2 \cdots v_{m}$} is defined as 
\[wv=w_1 w_2 \cdots w_{n}v_1 v_2 \cdots v_{m}.\] 
The   {\em alphabet} $A= \Alphabet(w)=\left\{w_i \mid 1\leq i \leq n\right\}$ is  equipped with a total order $\prec$ which extends lexicographically to the set $A^*$ of all words over $A$.

A word $u$ is called a {\em factor} of $w$ if $w = pus$ for some words $p,s$. The $i$-th prefix ending at position $i$ is denoted $w_p(i) =w_1w_2 \cdots w_{i}$ and the  $i$-th suffix starting at position $i$ is $w_{s}(i)=w_iw_{i+1} \cdots w_{n}$. Hence every word $w=w_1w_2 \cdots w_n$ factorizes as $w= w_p(i-1) w_{s}(i)$. Of course, $w_s(1)=w$ and $w_p(0)=\varepsilon$. The set of all length-$i$ factors of $w$ is denoted  $\fac_i(w)$ and the set of all factors of $w$ is $\fac(w)$. 

Two finite words $u$ and $v$ are {\em conjugates} when there exist words $x,y$ such that $u=xy$ and $v=yx$. The conjugacy class of a word $w$ is denoted by $[w]$. 
Thus, $[w]=\left\{v|v=w_s(i)w_p(i-1), i=1,2,\ldots, n\right\}$.

For any natural number $k$,  the {\em $k$-power} of a nonempty finite word $u$ is the concatenation of $k$ copies of $u$, and it is denoted by $u^k$. In particular, a \emph{square} is a word $w$ of the form $w=uu$. 
A word $w$ is said to be {\em primitive} if it is not a power greater than or equal to $2$ of a word distinct from $w$. The set of primitive factors is denoted by $\pri(w)$. For any word $u$ and any rational number $\alpha=\frac{m}{|u|}$, the $\alpha$-power of $u$ is defined to be $u^au_0$ where $u_0$ is a prefix of $u$, $a=\lfloor \alpha \rfloor$ is the integer part of $\alpha$, and $|u^au_0|=m$. The $\alpha$-power of $u$ is denoted by $u^{\alpha}$. A word $w$ is said to be {\em of period $n$} if there exists a word $u$ and a positive rational number $\alpha \geq 1$ such that $|u|=n$ and $w=u^{\alpha}$. The period $n$ is said to be {\em the smallest period} if for any prefix $u'$ of $w$ satisfying $|u'|<n$, $w$ is not a (rational) power of $u'$.

\begin{exa}{\rm 
Let $w=bbab$, then $[w]=[w]_4=\left\{bbab,babb,abbb,bbba\right\}$,\\ 
$w[2,4]=bab,\ w^{\frac94}=bbabbbabb$.}
\end{exa}

\begin{lem}[{\rm Fine and Wilf in~\cite{fiwi}}]
\label{Fine}
Let $w$ be a word having periods $k$ and $l$. If $|w|\ge k+l-\mathrm{gcd}(k,l)$ then $\mathrm{gcd}(k,l)$ is also a period of $w$.    
\end{lem}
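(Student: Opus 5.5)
The plan is the classical Fine--Wilf argument by strong induction on $k+l$. Since $\gcd$ is symmetric, we may assume $k\le l$. If $k=l$ there is nothing to prove, because then $\gcd(k,l)=k$ is already a period of $w$. If $k$ divides $l$, then again $\gcd(k,l)=k$ and we are done. So the substantive case is $k<l$, and we want to reduce it to the pair $(k,\,l-k)$. This pair has the same gcd, and $k+(l-k)-\gcd(k,l-k)=l-\gcd(k,l)$ is strictly smaller than the original length threshold.

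The key step is to show that the prefix $w'$ of $w$ of length $|w|-k$ has period $l-k$. Fix an index $i$ with $1\le i$ and $i+(l-k)\le |w|-k$. Then $i+l\le |w|$, so period $l$ gives $w_i=w_{i+l}$. Since $i+l-k\ge 1$ and $i+l\le|w|$, period $k$ gives $w_{i+l}=w_{i+l-k}$. Hence $w_i=w_{i+(l-k)}$ for all admissible $i$.

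The prefix $w'$ also has period $k$, and its length satisfies
\[
|w'|=|w|-k\ \ge\ l-\gcd(k,l)=k+(l-k)-\gcd(k,l-k).
\]
By the induction hypothesis, applied to $w'$ with periods $k$ and $l-k$ (whose sum is less than $k+l$), $w'$ has period $d=\gcd(k,l)$. The base case of the induction is any pair in which one period divides the other, which we have already handled.

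It remains to lift period $d$ from $w'$ to all of $w$. First, $|w'|\ge l-d\ge k$, because $l\ge k+d$ when $k<l$ and $d\mid k,l$. So $w'$ contains a full block of length $k$. Now take any $i$ with $i+d\le |w|$:
\begin{itemize}
\item If $i+d\le |w'|$, then $w_i=w_{i+d}$ directly from the period $d$ of $w'$.
\item Otherwise $i>|w'|-d\ge k-d$. We write $i=i'+mk$ with $1\le i'$, $i'+d\le |w'|$, and $m\ge 0$, which is possible since $|w'|\ge k$. Using period $k$ of $w$ to shift both positions back by multiples of $k$, and period $d$ of $w'$ in between, we get $w_i=w_{i'}=w_{i'+d}=w_{i+d}$.
\end{itemize}

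The main obstacle is this final lifting step: one must check that the index shifts stay inside $w$ and that the needed window inside $w'$ exists. Everything else is routine, so the careful verification of the inequality $|w'|\ge k$ (together with $l\ge k+d$) is where I would concentrate attention.
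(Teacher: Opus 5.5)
The paper does not prove this lemma; it simply cites Fine and Wilf. So there is no internal argument to compare with, and your Euclidean induction (passing from $(k,l)$ to $(k,l-k)$ on the prefix $w'$ of length $|w|-k$) is the standard route. Your reduction step and length bookkeeping are correct. There is, however, a real gap in the lifting step, in the second bullet.

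\textbf{The failing step.} Writing $i=i'+mk$ with $1\le i'$ and $i'+d\le |w'|$ requires every residue class modulo $k$ to meet $\{1,\dots,|w'|-d\}$, that is, $|w'|\ge k+d$. The inequality $|w'|\ge k$ that you flagged does not give this, and it genuinely fails when $l=k+d$ and $|w|$ is minimal. For example, take $k=2$, $l=3$, $|w|=4$. Then $d=1$ and $|w'|=2$, and for $i=2$ the only candidate is $i'=2$, but $i'+d=3>|w'|$. This is not an isolated case: the indices of your second case, $|w'|-d<i\le |w|-d$, are $k$ consecutive integers and so hit every residue mod $k$. Hence whenever $|w'|<k+d$, some index has no admissible $i'$, and the chain $w_i=w_{i'}=w_{i'+d}=w_{i+d}$ is unavailable.

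\textbf{The repair.} The conclusion still holds, and the fix needs only $|w'|\ge k$. Given $1\le i\le |w|-d$, choose $a,b\in\{1,\dots,k\}$ with $a\equiv i$ and $b\equiv i+d \pmod k$.
\begin{enumerate}
\item Period $k$ of $w$ gives $w_i=w_a$ and $w_{i+d}=w_b$.
\item Since $d\mid k$, we have $a\equiv b\pmod d$.
\item Both $a,b\le k\le |w'|$, so iterating period $d$ inside $w'$ gives $w_a=w_b$.
\end{enumerate}
Equivalently, the length-$k$ prefix of $w$ equals $y^{k/d}$ with $|y|=d$, and a word of period $k$ with that prefix is a prefix of $yyy\cdots$. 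With this replacement the first bullet is no longer needed, and the proof is complete.
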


\begin{cor}\label{lem:unique-primitive-root}
Let $x$ be a word. If $x=p^\alpha=q^\beta$, where \(p\) and \(q\) are primitive words and \(\alpha,\beta\ge 2\), then
\(p=q\).
\end{cor}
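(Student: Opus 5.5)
We derive the corollary from Lemma~\ref{Fine} applied to the word $x$ with its two periods $|p|$ and $|q|$.

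\emph{Setting up periods and length.} Since $x=p^\alpha$, the word $x$ has period $k=|p|$. Since $x=q^\beta$, it has period $l=|q|$. Without loss of generality assume $k\le l$. Because $\beta\ge 2$, we get $|x|=\beta l\ge 2l\ge k+l\ge k+l-\gcd(k,l)$.

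\emph{Applying Fine and Wilf.} Lemma~\ref{Fine} now applies, so $d=\gcd(k,l)$ is also a period of $x$. Let $r$ be the prefix of $x$ of length $d$. The word $p$ is the prefix of $x$ of length $k$, and $k\le|x|$ since $\alpha\ge 1$. As $d$ divides $k$ and $d$ is a period of $x$, we get $p=r^{k/d}$. Primitivity of $p$ forces $k/d=1$, so $d=k$. By the same argument $q=r^{l/d}$, and primitivity of $q$ gives $l=d$. Hence $k=l$.

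\emph{Conclusion.} Both $p$ and $q$ are the prefix of $x$ of length $k=l$, so $p=q$.

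The only point needing care is the length hypothesis of Lemma~\ref{Fine}. It uses $\beta\ge 2$ only for the longer candidate period $l$, which is exactly where the hypothesis $\alpha,\beta\ge 2$ enters. The rest is routine.
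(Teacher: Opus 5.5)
Your proof is correct and follows the same route as the paper. Both check $|x|\ge 2\max\{|p|,|q|\}\ge |p|+|q|-\gcd(|p|,|q|)$, apply Lemma~\ref{Fine} to obtain the period $d=\gcd(|p|,|q|)$, and conclude by primitivity. You also spell out the step the paper leaves implicit: since $d\mid |p|$, the word $p$ is the $(|p|/d)$-th power of the length-$d$ prefix of $x$, which forces $|p|=|q|=d$ and hence $p=q$.
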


\begin{proof}
Since $|x|\ge \max\{2|p|,2|q|\}\ge |p|+|q|-\gcd(|p|,|q|)$,
by Lemma~\ref{Fine}, \(d=\gcd(|p|,|q|)\) is also a period of \(x\).
From the primitivity of $p$ and $q$, one has \(p=q\). \qed
\end{proof}

Next let us recall the formal definition of Rauzy graphs from~\cite{Rauzy}. Let $w$ be a word of length $n$. For any integer $i$ such that $1\leq i\leq n$, the Rauzy graph $\Gamma_i(w)$ is a directed graph whose set of vertices is $\fac_i(w)$ and the set of edges is $\fac_{i+1}(w)$.
An edge $e \in \fac_{i+1}(w)$ starts at the vertex $u$ and ends at the vertex $v$ if and only if $u$ is a prefix and $v$ is a suffix of $e$. 

The graphs $\Gamma_i(w)$ for different values of $i$ are pairwise disjoint; for convenience, define
\[
\Gamma(w)=\bigcup_{i=1}^{n}\Gamma_i(w).
\]
In general, a graph $M=(V', E')$ is called a \emph{subgraph} of $G$ if $V' \subset V$ and $E' \subset E$. In this case, let us denote $M \sqsubset G$.

For any primitive word $p \in \fac(w)$, if there exists an integer $i$ such that $i \geq |p|$ and that $[p]_i \subset \fac(w)$ and $[p]_{i+1} \subset \fac(w)$, then $C=([p]_{i},[p]_{i+1})$ is a subgraph of $\Gamma_i(w)$. From~\cite{sq}, $C=([p]_{i},[p]_{i+1})$ is called a {\em small circuit} of $\Gamma_i(w)$ and denoted by $C(p,i)$.

For $1\le i\le n$, let $sc_i(w)$ be the number of small circuits in $\Gamma_i(w)$, and set~\cite{sq}
\[
sc(w)=\sum_{i=1}^n sc_i(w),
\]
the total number of small circuits in $\Gamma(w)$.

\begin{lem}[{\rm Brlek and Li~\cite{sq}}]
\label{total-number}
Let $w$ be a finite word of length $n$. Then, 
\begin{itemize}
    \item[1.] for all integers $1\leq i\leq n$, one has 
$$sc_i\le |\fac_{i+1}(w)|-|\fac_i(w)|+1.$$
    \item[2.] the total number of small circuits in $\Gamma(w)$ is no more than $n$.
\end{itemize}
\end{lem}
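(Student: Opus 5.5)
The plan is to read part~1 as a statement about the cycle space of the Rauzy graph, and then obtain part~2 by telescoping.

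Fix $i$ with $1\le i\le n$. First I check that $\Gamma_i(w)$ is weakly connected. The consecutive length-$i$ factors $w_jw_{j+1}\cdots w_{j+i-1}$, for $j=1,\dots,n-i+1$, form a walk in $\Gamma_i(w)$: two consecutive ones are joined by the edge $w_j\cdots w_{j+i}$. Every vertex and every edge lies on this walk, so the graph is connected. Hence its cyclomatic number, i.e.\ the dimension over $\mathbb{Q}$ of its cycle space, equals
\[
|\fac_{i+1}(w)|-|\fac_i(w)|+1 .
\]
Each small circuit $C(p,i)$ is a closed walk in $\Gamma_i(w)$ of length $|p|$. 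Indeed, the vertices $[p]_i$ are the length-$i$ factors of $p^\infty$, and each edge of $[p]_{i+1}$ shifts the phase by one. So $C(p,i)$ gives a vector in the cycle space, namely its edge indicator. Part~1 therefore follows once I show that distinct small circuits in $\Gamma_i(w)$ give linearly independent vectors.

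The cleanest route is to show that distinct small circuits are edge-disjoint, since nonzero vectors with disjoint supports are automatically independent. I expect this to be the main obstacle. Suppose an edge $e$ of length $i+1$ belongs to both $C(p,i)$ and $C(q,i)$ with $p\neq q$ primitive and $|p|,|q|\le i$. Then $e$ has periods $|p|$ and $|q|$. When $i+1\ge |p|+|q|-\gcd(|p|,|q|)$, Lemma~\ref{Fine} gives the period $\gcd(|p|,|q|)$. Primitivity then forces $|p|=|q|$, and $p,q$ are conjugate, as in Corollary~\ref{lem:unique-primitive-root}. So the two circuits are equal.

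In the short regime, $i+1<|p|+|q|-\gcd(|p|,|q|)$, I would use the whole circuit rather than a single edge. Say $|p|\le|q|$. Moving along $C(p,i)$ from $e$, every edge is again a factor of $p^\infty$. I would try to use the fact that $C(q,i)$ also contains $e$ to compare the out-edges at each vertex along the way. A vertex of $[p]_i$ has a unique continuation of period $|p|$. If the continuation of period $|q|$ coincides with it at every step, then the vertex sequence carries both periods on arbitrarily long stretches, which brings us back to the Fine–Wilf case. If the two continuations differ at some step, I take the first edge where they do and compare the two closed walks there.

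If edge-disjointness turns out to be false in general, my fallback is independence itself. Order the circuits by $|p|$, and for each circuit exhibit an edge that lies on no circuit of smaller or equal length. Triangularity of the incidence matrix then gives independence, which is all that part~1 needs.

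For part~2, sum the inequality of part~1 over $i=1,\dots,n$. The sum telescopes:
\[
sc(w)\le \sum_{i=1}^{n}\bigl(|\fac_{i+1}(w)|-|\fac_i(w)|+1\bigr)=|\fac_{n+1}(w)|-|\fac_1(w)|+n\le n-1\le n,
\]
because $\fac_{n+1}(w)=\emptyset$ and $|\fac_1(w)|\ge 1$.
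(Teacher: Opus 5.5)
The paper does not prove this lemma; it imports it from \cite{sq}, so your argument has to stand on its own. Most of your frame is sound. $\Gamma_i(w)$ is weakly connected, and its cycle space has dimension $|\fac_{i+1}(w)|-|\fac_i(w)|+1$. Each $C(p,i)$ is a simple directed cycle whose edge-indicator lies in that space. Part~2 follows by telescoping exactly as you write.

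The gap is the step that carries all the content of part~1: linear independence of the small circuits. Your primary route, edge-disjointness, is false, and it fails precisely in the short regime you flagged. Take $w=aabaabaaaba$ and $i=4$. Both $C(aab,4)$, with edges $aabaa,abaab,baaba$, and $C(aaba,4)$, with edges $aabaa,abaaa,baaab,aaaba$, are small circuits of $\Gamma_4(w)$. They share the edge $aabaa$, and here $i+1=5<3+4-1$. So the step-by-step comparison of continuations you sketch cannot end in a contradiction.

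Your fallback has the right shape, but you only announce its key step rather than prove it. That step is the existence of an edge of $C(p,i)$ lying on no other small circuit $C(q,i)$ with $|q|\le|p|$, and it is exactly the missing idea. It can be supplied using unbordered conjugates. Let $L$ be the Lyndon conjugate of $p$, and let $e_p$ be the edge of $C(p,i)$ that begins with $L$.
\begin{itemize}
\item If $e_p$ lay on $C(q,i)$ with $|q|<|p|$, then $e_p$, and hence its prefix $L$, would have period $|q|<|L|$. That gives $L$ a nonempty border, which is impossible for a Lyndon word.
\item If $|q|=|p|$, then $L$ is a length-$|q|$ factor of $q^\omega$. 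So $q\in[p]$ and the two circuits coincide.
\end{itemize}
List the circuits by nondecreasing $|p|$. The edges $e_p$ then make the incidence matrix triangular with ones on the diagonal. This gives the independence you need, and hence part~1.
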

\section{Some basic facts of rational power factors}\label{sec3}

Let $w$ be any word. For any $u\in\pri(w)$ such that $u^2\in\fac(w),|u|=\ell$, let 
$$\cl_w(u)=\{ p^m\in\fac(w)| p\in[u],m\in\ZZ,m\ge 2\};$$
$$\qcl_w(u)=\{ p^m\in\fac(w)| p\in[u],m\in\QQ,m\ge 2\};$$
$$D(w)=\{[u]|u^2\in\fac(w)\}.$$ 
Let $\rp(w)$ be the number of distinct rational power factors of $w$. By Corollary~\ref{lem:unique-primitive-root}, $\qcl_w(u)\cap\qcl_w(v)=\emptyset$ if and only if $u \not \in [v]$, hence
$$\rp(w)=\sum\limits_{[u]\in D(w)}|\qcl_w(u)|.$$
Let $$\rp(n)=\max_{|w|=n} \rp(w).$$
Let $M(u)$ be the number of small circuits in $\Gamma(w)$ of the form $C(u,m)$. In the following of this section, the factor $u$ is fixed, and we use $M$ instead of $M(u)$ for simplicity.\\ 
Let $S_{[u]}(w)=\{C(u,m)| C(u,m) \sqsubset \Gamma_m(w),\; 1\leq m\le|w|\}$ be the set of all small circuits in $\Gamma(w)$ of the form $C(u,m)$.
An easy observation is that if 
\(C(u,m)=([u]_m, [u]_{m+1}) \sqsubset \Gamma_m(w)\) for some $m > \ell$, then \(C(u,i)=([u]_{i}, [u]_{i+1}) \sqsubset \Gamma_{i}(w)\) for all $\ell \leq i\leq m$. Thus,
$$S_{[u]}(w)=\{C(u,\ell+i)|0\le i\le M(u)-1, i\in\ZZ\}.$$
Define a map $f$ from $S_{[u]}(w)$ to $2^{\fac(u^{\omega})}$ as follows:
$$f(C(u,\ell+i))=[u]_{2\ell+i},\quad 0\le i\le M-1, i\in\ZZ.$$
Hence for $i\ne j,\ f(C(u,\ell+i)) \cap f(C(u,\ell+j))= \emptyset$.
Let 
$$T=\bigcup_{i=0}^{M-1} f(C(u,\ell+i)).$$

\begin{lem}\label{no long factors}
For any integer $i\ge M$, $\qcl_w(u) \cap [u]_{2\ell+i}=\emptyset $.
\end{lem}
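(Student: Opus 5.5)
We argue by contradiction, using the observation just before the statement that the small circuits of the form $C(u,m)$ present in $\Gamma(w)$ are exactly $C(u,\ell+j)$ for $0\le j\le M-1$. It therefore suffices to show that a single word of $\qcl_w(u)\cap[u]_{2\ell+i}$ forces $C(u,\ell+i)\sqsubset\Gamma_{\ell+i}(w)$. Since $i\ge M$, that circuit is not in $S_{[u]}(w)$, which is the desired contradiction.

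Suppose $x\in\qcl_w(u)\cap[u]_{2\ell+i}$ with $i\ge M$. By definition $x=p^{\alpha}$ with $p\in[u]$, $|p|=\ell$, $\alpha\ge2$ and $|x|=2\ell+i$. Moreover $x$ is a factor of $w$, has period $\ell$, and every factor of $x$ lies in $\fac(u^\omega)$, because $x$ is a factor of $p^\omega$ and $p^\omega$ is a suffix of $u^\omega$.

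Next we show that all conjugates appear. Every length-$m$ factor of $u^\omega$ (with $m\ge\ell$) belongs to $[u]_m$. Conversely, the length-$m$ factors of $x$ starting at positions $1,2,\ldots,\ell$ are pairwise distinct, because $u$ is primitive and $m\ge\ell$; they are therefore all $\ell$ elements of $[u]_m$. Such factors exist as long as $|x|\ge m+\ell-1$.

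Take $m=\ell+i+1$. Then $m+\ell-1=2\ell+i=|x|$, so every element of $[u]_{\ell+i+1}$ is a factor of $x$, and hence of $w$. The same holds for $m=\ell+i$. Both sets have exactly $\ell$ elements, and each word of $[u]_{\ell+i+1}$ has prefix and suffix in $[u]_{\ell+i}$. Hence $([u]_{\ell+i},[u]_{\ell+i+1})$ is a subgraph of $\Gamma_{\ell+i}(w)$, that is, $C(u,\ell+i)\sqsubset\Gamma_{\ell+i}(w)$. This contradicts $i\ge M$.

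The step most likely to cause trouble is the length bookkeeping: the $\ell$ conjugates of length $\ell+i+1$ require a window of length exactly $2\ell+i$. This is exactly what the indexing $f(C(u,\ell+i))=[u]_{2\ell+i}$ in the paper is designed for. I would check the count of starting positions, $|x|-m+1=\ell$, carefully. Distinctness of the $\ell$ shifted factors also needs care: if two of them coincided, $u$ would equal a nontrivial rotation of itself, contradicting primitivity.
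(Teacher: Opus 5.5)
Your proof is correct and follows essentially the same route as the paper: a rational power of length $2\ell+i$ contains every element of $[u]_{\ell+i+1}$, which forces the small circuit $C(u,\ell+i)$ and contradicts $S_{[u]}(w)=\{C(u,\ell+j)\mid 0\le j\le M-1\}$. The only difference is that the paper first reduces to $i=M$ by passing to the length-$(2\ell+M)$ prefix, whereas you handle every $i\ge M$ directly and also supply the distinctness and window-counting details that the paper leaves implicit.
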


\begin{proof}
If there exists a word $v\in\qcl_w(u) \cap [u]_{2\ell+i_0}$ for some $i_0>M$, then its prefix $v_p(2\ell+M)\in\qcl_w(u) \cap [u]_{2\ell+M}$. Therefore, it is enough to prove in the case $i=M$. Given a word $p\in [u]_{2\ell+M}$, for any word $q\in [u]_{\ell+M+1}$, we have $q\in\fac(p)$. Hence if $p\in\qcl_w(u)$, then $C(u,\ell+M)$ is a small circuit in $\Gamma(w)$, which contradicts the definition of $S_{[u]}(w)$. \qed
\end{proof}

\begin{lem}\label{number of long rational factors}
For any integer $1\le i\le \ell$, $|\qcl_w(u) \cap [u]_{2\ell+M-i}|\le i$.
\end{lem}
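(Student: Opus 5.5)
The plan is to argue exactly as in the proof of Lemma~\ref{no long factors}, but to count more carefully how many elements of $[u]_{2\ell+M-i}$ are forced to contain a forbidden factor. Index the conjugates of $u$ by their starting shift: for $t\in\ZZ/\ell\ZZ$, let $u^{(t)}$ denote the conjugate of $u$ beginning at position $t+1$ of $u$. For every length $m\ge \ell$, write $[u]_m=\{x_t^{(m)}\mid t\in\ZZ/\ell\ZZ\}$, where $x_t^{(m)}$ is the prefix of length $m$ of $(u^{(t)})^{\omega}$. Since $u$ is primitive, its $\ell$ conjugates are pairwise distinct. Hence the $x_t^{(m)}$ are pairwise distinct for $m\ge\ell$, and $|[u]_m|=\ell$.

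\emph{Step 1: locate a missing factor.} We have $u^2\in\fac(w)$, so every element of $[u]_\ell$ and of $[u]_{\ell+1}$ is a factor of $w$. Thus $C(u,\ell)$ is a small circuit and $M\ge 1$. Moreover $C(u,\ell+M-1)\sqsubset\Gamma(w)$ gives $[u]_{\ell+M}\subset\fac(w)$. Since $C(u,\ell+M)$ is not a small circuit of $\Gamma(w)$, some word of $[u]_{\ell+M+1}$ is not a factor of $w$. Call it $q=x_s^{(\ell+M+1)}$ for some shift $s$.

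\emph{Step 2: count the words containing $q$.} Fix $t$ and put $p=x_t^{(2\ell+M-i)}$. A factor of length $\ell+M+1$ of $p$ starting at position $j+1$ is $x_{t+j}^{(\ell+M+1)}$. Here $j$ ranges over $0,1,\dots,(2\ell+M-i)-(\ell+M+1)=\ell-i-1$, so there are $\ell-i$ such factors. The shifts $t,t+1,\dots,t+\ell-i-1$ are $\ell-i\le\ell$ consecutive residues, hence pairwise distinct modulo $\ell$. Therefore $p$ contains $q$ as a factor whenever $s\in\{t,t+1,\dots,t+\ell-i-1\}\pmod\ell$, that is, whenever $t\in\{s,s-1,\dots,s-\ell+i+1\}\pmod \ell$. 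This set consists of exactly $\ell-i$ values of $t$.

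\emph{Step 3: conclude.} For each of these $\ell-i$ values of $t$, the word $x_t^{(2\ell+M-i)}$ contains the non-factor $q$. So it is not a factor of $w$, and in particular it does not lie in $\qcl_w(u)$. Only the remaining $\ell-(\ell-i)=i$ values of $t$ can contribute to $\qcl_w(u)\cap[u]_{2\ell+M-i}$. Since distinct $t$ give distinct words, this yields $|\qcl_w(u)\cap[u]_{2\ell+M-i}|\le i$.

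The main point needing care is Step 2. One must check that the $\ell-i$ windows really correspond to distinct residues, so that exactly $\ell-i$ starting shifts are excluded rather than fewer. This holds because $\ell-i\le \ell-1<\ell$. The boundary case $i=\ell$ is trivial, since then the bound $i=\ell$ equals $|[u]_{\ell+M}|$. One must also check that $M\ge1$, so that the missing word indeed has length $\ell+M+1$; this was done in Step 1.
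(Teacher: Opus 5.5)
Your proof is correct and is essentially the paper's argument. Both use that $C(u,\ell+M)$ is not a small circuit to obtain a word of $[u]_{\ell+M+1}$ that is not a factor of $w$, and that each word of $[u]_{2\ell+M-i}$ contains exactly the $\ell-i$ windows of length $\ell+M+1$ with consecutive cyclic shifts. The paper argues by contradiction: by pigeonhole on first-occurrence offsets inside a long power of $u$, any $i+1$ admissible words would together contain all of $[u]_{\ell+M+1}$. You instead fix one missing $q$ and count directly the $\ell-i$ shifts it excludes, which is a slightly cleaner dual form of the same count.
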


\begin{proof}

If $i=\ell$, $|\qcl_w(u) \cap [u]_{2\ell+M-\ell}|\le |[u]_{\ell+M}| =\ell$. Now let $1\le i\le \ell-1$. Assume for contradiction that there exist $i+1$ distinct words
\[
u^{(1)},u^{(2)},\dots,u^{(i+1)} \in \qcl_w(u)\cap [u]_{2\ell+M-i}.
\]
Here we prove that for all $q\in [u]_{\ell+M+1}$, there exists $1 \leq t \leq i+1$ such that $q \in \fac(u^{(t)})$. Let $U=u^{\frac{3\ell+M}{\ell}}$, then all the words $u^{(1)},u^{(2)},\dots,u^{(i+1)}$ occur at least once in $U$ and $q$ occurs at least twice. For each $j\in\{1,\dots,i+1\}$, let $d_j\in\{1,2,\dots,\ell\}$ be the first occurrence of $u^{(j)}$ in $U$, and let $d_q\in\{1,2,\dots,\ell\}$ be the first occurrence of $q$ in $U$.
Define $$d_q^{(j)}=\begin{cases}
    d_q,&\; \text{if $d_j \leq d_q$};\\
    d_q+\ell,&\; \text{otherwise}.
\end{cases}$$
Then $0\le d_q^{(j)}-d_j <\ell$ are pairwise distinct. Thus, there exists an integer $1 \leq t\leq i+1$ such that $0\le d_q^{(t)}-d_t <\ell-i$. In this case, $q \in \fac(u^{(t)})$.\qed

\end{proof}

\begin{lem}\label{estimation of RP for one class}
If $M\le\ell$, then $|\qcl_w(u)|\le \frac12 M(M+1)$. If $M>\ell$, then $|\qcl_w(u)|\le \frac12 \ell(\ell+1)+\ell(M-\ell)$.
\end{lem}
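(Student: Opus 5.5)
Every element of $\qcl_w(u)$ is a word of the form $p^m$ with $p\in[u]$ and rational $m\ge 2$, so it is a factor of $u^\omega$ of length at least $2\ell$, and it lies in exactly one set $[u]_{2\ell+j}$ with $j\ge 0$. The plan is therefore to decompose
\[
|\qcl_w(u)|=\sum_{j\ge 0}\bigl|\qcl_w(u)\cap[u]_{2\ell+j}\bigr|
\]
and bound each term using Lemma~\ref{no long factors}, Lemma~\ref{number of long rational factors}, and the trivial bound $|[u]_{2\ell+j}|\le \ell$. The trivial bound holds because a factor of $u^\omega$ of length at least $\ell$ is determined by its starting position modulo $\ell$.

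\textbf{Vanishing terms.} By Lemma~\ref{no long factors}, every term with $j\ge M$ vanishes, so only $0\le j\le M-1$ contributes.

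\textbf{Bounds for the remaining terms.} Write $j=M-i$ with $1\le i\le M$. When $i\le \ell$, Lemma~\ref{number of long rational factors} gives at most $i$ words of length $2\ell+M-i$. When $i>\ell$, we only use the trivial bound $\ell$. Hence
\[
|\qcl_w(u)|\le \sum_{i=1}^{M}\min(i,\ell).
\]

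\textbf{Evaluating the sum.} If $M\le \ell$, every index satisfies $i\le\ell$, so the sum is $\sum_{i=1}^M i=\frac12M(M+1)$. If $M>\ell$, the indices $1\le i\le \ell$ contribute $\frac12\ell(\ell+1)$, and the remaining $M-\ell$ indices contribute at most $\ell$ each, giving $\frac12\ell(\ell+1)+\ell(M-\ell)$. These are exactly the two claimed bounds.

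The only point needing care is that Lemma~\ref{number of long rational factors} is stated for $1\le i\le\ell$, while the length $2\ell+M-i$ must remain at least $2\ell$, i.e.\ $i\le M$. When $M\le\ell$ we therefore apply it only for $i\le M$, which lies in its range. Lengths below $2\ell$ contain no rational power of a conjugate of $u$ with exponent at least $2$, so those indices are simply excluded from the sum.
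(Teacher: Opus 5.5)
Your proposal is correct and follows essentially the same argument as the paper. You slice $\qcl_w(u)$ by length, discard lengths $\ge 2\ell+M$ via Lemma~\ref{no long factors}, bound the slice of length $2\ell+M-i$ by $i$ for $1\le i\le\min(M,\ell)$ via Lemma~\ref{number of long rational factors} and by $|[u]_{2\ell+M-i}|=\ell$ for $i>\ell$, and sum; writing the total as $\sum_{i=1}^{M}\min(i,\ell)$ simply packages the paper's two cases into one expression.
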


\begin{proof}
By Lemma~\ref{no long factors}, one has $\qcl_w(u)\subset T$. If $M\le\ell$, then by Lemmas~\ref{no long factors} and \ref{number of long rational factors}, for $0\le i\le M$, one has $|\qcl_w(u) \cap [u]_{2\ell+M-i}|\le i$. Hence 
$$|\qcl_w(u)|\le \sum\limits_{i=0}^M i=\frac12 M(M+1).$$\\
If $M>\ell$, then 
$$|\qcl_w(u) \cap [u]_{2\ell+M-i}| \leq 
\begin{cases}
|[u]_{2\ell+M-i}|=\ell,& \; \text{if $\ell+1\le i\le M$};\\
i,& \; \text{if $1\le i\le \ell$}.\\
\end{cases}$$ 
The last inequality holds from Lemma~\ref{number of long rational factors}. Hence $$|\qcl_w(u)|\le \ell(M-\ell)+\sum\limits_{i=1}^{\ell} i=\frac12 \ell(\ell+1)+\ell(M-\ell).$$

\end{proof}

\section{Proof of the main theorem}\label{sec4}
In this section, we give an upper bound for the number of distinct rational power factors in a word. From now on, let $w$ be a word of length $n$ and let $N=n+1$;\\
let
\[
        D(w)=\{[u]|u\in\pri(w),u^2\in\fac(w)\}=\{[u_1],\ldots,[u_t]\};
\]
and let
\[
        \ell_i:=|u_i|,\qquad m_i:=M(u_i),\qquad A_i:=\ell_i+m_i.
\]
Without loss of generality, let us assume $A_1\ge A_2\ge\cdots\ge A_t$.\\
For each \(i\), define the {\em active interval} of \([u_i]\) by
\[
        I_i:=\{\ell_i,\ell_i+1,\ldots,A_i-1\}.
\]
Equivalently, \(q\in I_i\) if and only if the Rauzy graph \(\Gamma_q(w)\)
contains the small circuit \(C(u_i,q)\).\\
Let
\[
F(\ell,m):=
\begin{cases}
\left(m-\dfrac12\ell\right)\ell,& m\ge \ell,\\[1ex]
\dfrac12 m^2,& m<\ell.
\end{cases}
\]\\
Since we only concern the leading coefficient of the asymptotic upper bound of $\rp(n)$, we will only use the normalization of $F$, that is,  $F(\frac{\ell_i}{N},\frac{m_i}{N})$. In this sense, the domain of $F$ is $[0,1]^2$. 

\begin{lem}\label{estimation for RP in total}
One has 
$$\rp(w)\le N^2\sum_{i=1}^t F(\frac{\ell_i}{N},\frac{m_i}{N}) + O(N).$$ 
\end{lem}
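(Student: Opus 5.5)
The plan is to start from the decomposition $\rp(w)=\sum_{[u]\in D(w)}|\qcl_w(u)|=\sum_{i=1}^t|\qcl_w(u_i)|$, which rests on Corollary~\ref{lem:unique-primitive-root}, and to bound each summand with Lemma~\ref{estimation of RP for one class}. The aim is the per-class inequality
\[
|\qcl_w(u_i)|\le N^2F\Bigl(\frac{\ell_i}{N},\frac{m_i}{N}\Bigr)+c_i,
\]
with error terms $c_i$ whose total over $i$ is $O(N)$. Since $F$ is homogeneous of degree $2$ on each branch, and the branch condition $m\ge \ell$ is scale invariant, we have $N^2F(\ell_i/N,m_i/N)=F(\ell_i,m_i)$. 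So the comparison reduces to integers.

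For the comparison itself there are two cases.
\begin{itemize}
\item If $m_i\le \ell_i$, the first bound of Lemma~\ref{estimation of RP for one class} gives $\frac12 m_i(m_i+1)=\frac12m_i^2+\frac12 m_i$. When $m_i<\ell_i$ this equals $F(\ell_i,m_i)+\frac12 m_i$. When $m_i=\ell_i$ we have $F=(m_i-\frac12\ell_i)\ell_i=\frac12m_i^2$, so the same identity holds.
\item If $m_i>\ell_i$, the second bound gives $\frac12\ell_i(\ell_i+1)+\ell_i(m_i-\ell_i)=(m_i-\frac12\ell_i)\ell_i+\frac12\ell_i=F(\ell_i,m_i)+\frac12\ell_i$.
\end{itemize}
In both cases the error is $c_i=\frac12\min(\ell_i,m_i)\le\frac12 m_i$.

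It then remains to control $\sum_i c_i\le\frac12\sum_i m_i$. Here $m_i=M(u_i)$ counts the small circuits in $\Gamma(w)$ of the form $C(u_i,q)$. Distinct classes give distinct small circuits, because the vertex set $[u_i]_q$ determines the class. Hence $\sum_i m_i\le sc(w)\le n<N$ by Lemma~\ref{total-number}(2). Summing the per-class inequalities then yields the claim, with the explicit error term $\frac12 N$.

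The only step needing care is this last one. The $O(N)$ error must not accumulate a contribution of order $\ell_i$ per class with up to linearly many classes. The bound $\min(\ell_i,m_i)\le m_i$, combined with the global circuit count, is what keeps the total linear. A secondary point is the boundary case $m_i=\ell_i$, where the two branches of $F$ must agree; the computation above checks this. Everything else is bookkeeping.
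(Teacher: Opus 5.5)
Your proof is correct and follows the same route as the paper. Like the paper, you bound each class via Lemma~\ref{estimation of RP for one class}, note that the excess over $N^2F(\ell_i/N,m_i/N)=F(\ell_i,m_i)$ is $O(m_i)$, and sum using $\sum_i m_i\le n$ from Lemma~\ref{total-number}; you additionally make the paper's unspecified $O(m_i)$ explicit as $\frac12\min(\ell_i,m_i)$, which gives the concrete total error $\frac12 N$.
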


\begin{proof}
For each $[u_i]\in D(w)$, its contribution
to $\rp(w)$ is bounded by $N^2F(\frac{\ell_i}{N},\frac{m_i}{N})+O(m_i)$. By Lemma~\ref{total-number}, one has
\begin{equation}\label{sum of M}
\sum_{i=1}^t m_i\le n<N.   
\end{equation}
Hence
$$\rp(w)\le N^2\sum_{i=1}^t F(\frac{\ell_i}{N},\frac{m_i}{N}) + O(N).$$ \qed
\end{proof}

\begin{lem}\label{lem:pivot-overlap}
For every \(1\le j\le t\), one has
\begin{equation}\label{lem9eq1}
\sum_{i=1}^t |I_i\cap I_j|\le N-2\ell_j .    
\end{equation}
In particular, taking \(j=1\), one has
\begin{equation}\label{lem9eq2}
\sum_{i=2}^t |I_i\cap I_1|\le N-2\ell_1-m_1 .    
\end{equation}
Moreover, for every \(1\le j\le t\), one has
\begin{equation}\label{lem9eq3}
2\ell_j+m_j\le N .    
\end{equation}
\end{lem}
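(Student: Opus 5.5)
Fix $j$. For every $q\in I_j$ the Rauzy graph $\Gamma_q(w)$ contains the small circuit $C(u_j,q)$, so $\Gamma_q(w)$ has a vertex, namely any element of $[u_j]_q$. Hence $q\le n$, and in fact the longest such vertex is a factor of $u_j^{\alpha}$ of length $q+1$. The plan is to combine the per-level bound of Lemma~\ref{total-number}(1) with a more careful count of the number of levels on which new factors can appear.

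First rewrite the left-hand side of (\ref{lem9eq1}) as a sum over levels:
\[
\sum_{i=1}^t |I_i\cap I_j|=\sum_{q\in I_j}\#\{i: q\in I_i\}=\sum_{q\in I_j} sc_q(w).
\]
By Lemma~\ref{total-number}(1), this is at most
\[
\sum_{q\in I_j}\bigl(|\fac_{q+1}(w)|-|\fac_q(w)|+1\bigr),
\]
and the sum telescopes. With $I_j=\{\ell_j,\ldots,A_j-1\}$ we obtain
\[
\sum_{i=1}^t |I_i\cap I_j|\le |\fac_{A_j}(w)|-|\fac_{\ell_j}(w)|+m_j .
\]
We then use $|\fac_{A_j}(w)|\le n-A_j+1=N-A_j$ and $|\fac_{\ell_j}(w)|\ge \ell_j$. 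The second inequality holds because $[u_j]_{\ell_j}$ consists of $\ell_j$ distinct conjugates, by primitivity. This gives
\[
N-\ell_j-m_j-\ell_j+m_j=N-2\ell_j,
\]
which is (\ref{lem9eq1}).

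For (\ref{lem9eq2}), take $j=1$ and split off the term $i=1$, which contributes $|I_1|=m_1$. For (\ref{lem9eq3}), note that the term $i=j$ alone gives $m_j\le N-2\ell_j$. This uses only that $I_j\cap I_j=I_j$ has $m_j$ elements.

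The main point to verify is the telescoping range. We need $C(u_j,q)\sqsubset\Gamma_q(w)$ for all $q\in I_j$, so that $[u_j]_{A_j}\subset\fac(w)$ and $A_j\le n$. This makes $|\fac_{A_j}(w)|\le n-A_j+1$ legitimate. Indeed, $C(u_j,A_j-1)$ requires $[u_j]_{A_j}\subset \fac_{A_j}(w)$, so this holds. If $m_j=0$, the statements are trivial; (\ref{lem9eq3}) still holds since $u_j^2$ is a factor, so $2\ell_j\le n<N$.
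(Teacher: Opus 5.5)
Your proof is correct and is essentially the paper's argument: you bound the number of classes active at level $q$ by $sc_q(w)$, apply Lemma~\ref{total-number}(1), telescope over $I_j$, and use $|\fac_{A_j}(w)|\le N-A_j$ and $|\fac_{\ell_j}(w)|\ge \ell_j$; the paper instead sums $k_q-1$ and adds $m_j$ at the end, which is the same computation. One small correction: $\sum_{q\in I_j}\#\{i: q\in I_i\}=\sum_{q\in I_j} sc_q(w)$ should be an inequality $\le$, because $\Gamma_q(w)$ can contain small circuits $C(p,q)$ for classes $[p]\notin D(w)$ (for example $C(ab,2)$ in $w=abaabbab$, which contains $aba$ and $bab$ but neither $abab$ nor $baba$), and only the inequality is used.
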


\begin{proof}
Fix \(j\in\{1,\ldots,t\}\).  For each \(q\in I_j\), let
$k_q:=|\{i:q\in I_i\}|.$ Since \(q\in I_j\), one has
\(k_q\ge 1\).
For each $i$ such that $q\in I_i$, $C(u_i,q)$ is a small circuit in $\Gamma_q(w)$ and they are distinct. Hence
\[
        k_q\le sc_q(w).
\]
By Lemma~\ref{total-number},
\[
        sc_q(w)\le |\fac_{q+1}(w)|-|\fac_q(w)|+1.
\]
Thus,
\[
        k_q-1
        \le
        |\fac_{q+1}(w)|-|\fac_q(w)|.
\]
Summing over
\(
        q=\ell_j,\ell_j+1,\ldots,A_j-1,
\) we get
\[
\begin{aligned}
        \sum_{q\in I_j}(k_q-1)
        &\le
        \sum_{q=\ell_j}^{A_j-1}
        \bigl(|\fac_{q+1}(w)|-|\fac_q(w)|\bigr)\\
        &=
        |\fac_{A_j}(w)|-|\fac_{\ell_j}(w)|.
\end{aligned}
\]
The left-hand side equals
\[
        \sum_{i\ne j}|I_i\cap I_j|.
\]
For the right-hand side, we use
\[
        |\fac_{A_j}(w)|\le n-A_j+1=N-A_j
\]
and
\[
        |\fac_{\ell_j}(w)|\ge |[u_j]_{\ell_j}|=\ell_j.
\]
Hence
\[
        \sum_{i\ne j}|I_i\cap I_j|
        \le
        N-A_j-\ell_j.
\]
Since \(A_j=\ell_j+m_j\), one has
\[
        \sum_{i\ne j}|I_i\cap I_j|
        \le
        N-2\ell_j-m_j.
\]
Adding the term \(|I_j\cap I_j|=|I_j|=m_j\) to both sides yields
\[
        \sum_{i=1}^t |I_i\cap I_j|
        \le
        N-2\ell_j.
\]
To prove Equation~\ref{lem9eq2}, taking \(j=1\), so that
\[
        \sum_{i=2}^t |I_i\cap I_1|
        \le
        N-2\ell_1-m_1.
\]
To prove Equation~\ref{lem9eq3}, taking \(i=j\), so that
\[
        m_j\le N-2\ell_j,
\]
\qed
\end{proof}

We now compute the upper bound of the quadratic coefficient of $\rp(w)$. It is useful to introduce the following function: for
\(0<s\le 1/2\) and \(s\le B\le 2s\), define 
$$\mathcal P(s,B):=\max \sum_{\substack{x_j+y_j\le s\\\sum_j y_j\le B\\x_j,y_j\ge 0}} F(x_j,y_j) $$ 

To give a concrete illustration of the parameters $x_i, y_i$ in the previous equation, for $[u_j] \in D(w)$, \(x_j=\frac{\ell_j}{N}\) and  \(y_j=\frac{m_j}{N}\).

\paragraph{Sketch of proof.}
We start from
\[
\rp(w)\le \sum_i N^2F(\frac{\ell_i}{N},\frac{m_i}{N})+O(n),
\qquad
\sum_i m_i\le n.
\]
There are two cases.

If \(A_1\le N/2\), then every active interval ends before \(N/2\). We will prove the leading coefficient of $\rp(w)$ is no more than $\mathcal P\left(\frac12,1\right)$.
Lemma~\ref{lem:packing-function} gives
\(
\mathcal P\left(\frac12,1\right)\le \frac18.
\)

If \(A_1>N/2\), we compute $\rp(w)$ in three parts. Let $D_1=\{[u_1]\}$, $D_2=\{[u_i]||u_i|<|u_1|\}$ and $D_3=\{[u_i]||u_i|\ge |u_1|, i \ne 1\}$ and let $\rp_i(w)=|\cup \{\qcl_w(u)| [u] \in D_i \}|$ for $i=1,2,3$, we will upper bound $\rp_i(w)$ by $F(\frac{\ell_1}{N},\frac{m_1}{N})$, $\mathcal P(\frac{N-\ell_1-m_1-K}{N},\frac{N-m_1-K}{N})$ and $\frac12(\frac{K}{N})^2$ for $i=1,2,3$ respectively with some suitable number $K$. Then we will conclude by using Lemma~\ref{lem:one-eighth-reduction}.

In summary, in each case, we can transform the problem into a constrained multi-variable optimization problem. To do so, we have to check if each case satisfies the constraints of the corresponding lemma.

\begin{thm}\label{thm:rational-power-eighth}
\[
\rp(w)\le \frac18 n^2+O(n).
\]
\end{thm}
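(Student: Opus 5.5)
We follow the sketch given just before the statement. By Lemma~\ref{estimation for RP in total} it suffices to show that the quantity $\sum_i F(x_i,y_i)$, with $x_i=\ell_i/N$ and $y_i=m_i/N$, is at most $\frac18+O(1/N)$. The constraints available are the following:
\begin{itemize}
\item $\sum_i y_i\le 1$, which comes from \eqref{sum of M};
\item $2x_i+y_i\le 1$ for every $i$, which is \eqref{lem9eq3};
\item the overlap inequalities of Lemma~\ref{lem:pivot-overlap}.
\end{itemize}
First we record two elementary facts about $F$. It is nondecreasing in $m$, and for fixed sum $x+y=A$ it is maximized at $y=A$, $x=0$. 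More usefully, $F(x,y)\le \frac12 y^2$ always holds, since $(m-\frac12\ell)\ell\le \frac12 m^2$ is equivalent to $(m-\ell)^2\ge0$. Also $F(x,y)\le xy$ trivially.

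We then prove the packing estimate $\mathcal P(\frac12,1)\le\frac18$, which is the content I expect Lemma~\ref{lem:packing-function} to supply. For each term we have $F(x_j,y_j)\le \frac12 y_j\min(y_j,\,2x_j)$. When $x_j+y_j\le\frac12$ one checks that $F(x_j,y_j)\le \frac14 y_j$:
\begin{itemize}
\item if $y_j<x_j$, then $y_j\le \frac14$, so $\frac12 y_j^2\le \frac18 y_j$;
\item if $y_j\ge x_j$, then $F=y_jx_j-\frac12x_j^2\le y_j x_j\le y_j(\frac12-y_j)$, and one maximizes $(y-\frac12x)x/y$ subject to $x\le y$ and $x+y\le\frac12$, which gives at most $\frac14$ at $x=y=\frac14$.
\end{itemize}
Hence $\sum_j F\le \frac14\sum_j y_j\le\frac14$. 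This is too weak, so a sharper argument is needed. Using $F\le\frac12 y^2$ together with $y_j\le\frac12$ gives $\sum_j F\le \frac12\cdot\frac12\sum_j y_j\le\frac14$, still too weak. The correct bound must therefore combine the constraint $\sum_j y_j\le B$ with per-term concavity: $\sum_j F$ is a convex-type function of the $y_j$, so it is maximized at extreme points, where a few $y_j$ are as large as possible, namely $y_j=s$ and $x_j=0$, each contributing $\frac12 s^2$. With $B=1$ and $s=\frac12$ this gives two terms and $\frac14$.

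The case $A_1\le N/2$ therefore cannot be settled by $\mathcal P$ alone. There one must also use \eqref{lem9eq1}: intervals $I_i\subset[0,N/2)$ that overlap are charged against one another. Concretely, since $k_q\le |\fac_{q+1}|-|\fac_q|+1$, summing over $q<N/2$ bounds $\sum_i m_i=\sum_q k_q$ by $N/2+|\fac_{N/2}(w)|\le N$. This is the same constraint as before, so the gain has to come from Lemma~\ref{lem:pivot-overlap} applied with $j$ equal to the largest interval. That application forces $\sum_{i\ne j} |I_i\cap I_j|\le N-2\ell_j-m_j$, so two near-maximal intervals cannot coexist.

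The main obstacle is making this precise. The plan is to order the classes by $A_i$ and apply \eqref{lem9eq2} to split the remaining classes into those overlapping $I_1$ and those lying in $[0,\ell_1)$, the latter having total length at most $\ell_1$ in the sense of the bound on $\sum_i m_i$. This is exactly the second case, with $K$ chosen as the total overlap with $I_1$. The classes in $D_3$ have $\ell_i\ge\ell_1$, so they live entirely inside $I_1$ and their total $m$ is at most $K$; convexity then gives the bound $\frac12(K/N)^2$. The classes in $D_2$ satisfy the packing constraints with $s=(N-\ell_1-m_1-K)/N$ and $B=(N-m_1-K)/N$.

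The final step is a one-variable optimization (Lemma~\ref{lem:one-eighth-reduction}), showing that
\[
F(x_1,y_1)+\mathcal P(s,B)+\tfrac12 k^2\le \tfrac18
\]
under $2x_1+y_1\le1$. This step is where I expect most of the work to lie.
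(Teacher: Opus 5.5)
There is a genuine gap, and it sits exactly where you abandon the paper's route. Your claim that for fixed $x+y=A$ the function $F$ is maximized at $x=0$, $y=A$ is false, and so is the resulting claim that extreme configurations $x_j=0$, $y_j=s$ contribute $\frac12 s^2$ each. For $x=0\le y$ the first branch of $F$ applies, so $F(0,y)=0$. The bound $F\le\frac12y^2$ is attained only when $y\le x$, which under $x+y\le s$ forces $y\le s/2$; the single-term maximum is $s^2/6$, at $(x,y)=(s/3,2s/3)$. Eliminating $x$ for fixed $y=rs$ gives the profile $s^2\varphi(r)$ of Lemma~\ref{lem:packing-function}, with $\varphi(r)=\frac12r^2$ on $[0,\frac12]$ and $\varphi(r)=\frac12(1-r)(3r-1)$ on $[\frac12,1]$. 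Since $\varphi$ is concave on $[\frac12,1]$, your extreme-point heuristic points the wrong way: the large terms should be spread out, not concentrated. With $s=\frac12$ and $\sum_jr_j\le2$ one gets $\max\sum_j\varphi(r_j)=\frac12$, attained by three terms at $r=\frac23$ or four at $r=\frac12$; in fact $3\varphi(r)+\frac12(2-3r)^2=\frac12$ for every $r\in[\frac12,\frac23]$. Hence $\mathcal P(\frac12,1)=\frac18$, and Case~1 ($A_1\le N/2$) is settled by Lemma~\ref{lem:packing-function} alone, as in the paper. Your proposal instead leaves Case~1 with no argument: the idea that \eqref{lem9eq1} prevents ``two near-maximal intervals'' from coexisting is never turned into an inequality. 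Your ratio computation is also off: at $x=y=\frac14$ one has $F/y=\frac18$, not $\frac14$. The supremum of $F/y$ over $x+y\le\frac12$ is $1-\frac{\sqrt3}{2}\approx0.134$, which is why the linear bound comes close to $\frac18$ but misses it.

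In Case~2 you reproduce the paper's decomposition into $D_1,D_2,D_3$ with $K$, $s=(N-\ell_1-m_1-K)/N$ and $B=(N-m_1-K)/N$, but you stop before the final inequality, so nothing is proved there either. Worse, that step is Lemma~\ref{lem:one-eighth-reduction}, which rests on the estimate $\mathcal P(s,B)\le s^2V(B/s)$ (for instance $\mathcal P(s,2s)\le\frac12s^2$). Your extreme-point reasoning would give $\mathcal P(s,2s)\ge s^2$, so as written the proposal contradicts the lemma it invokes. To close the argument you need three things. First, the profile $\varphi$ and the bound $V$. Second, the split of the remaining classes by $\ell_i<\ell_1$ versus $\ell_i\ge\ell_1$, with $K$ counting only the overlaps of $D_3$. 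Your split into classes overlapping $I_1$ and classes ``lying in $[0,\ell_1)$'' does not work, because classes of $D_2$ may overlap $I_1$. For those classes \eqref{lem9eq2} gives $A_i-\ell_1\le N-2\ell_1-m_1-K$, and this is what yields \eqref{eq:cond-li-mi}. Third, the final optimization under the constraints $0<\ell_1/N\le s\le\frac12$ and $0\le K/N\le\frac12-s$ (from \eqref{lem9eq2} and $A_1>N/2$), using $m_1/N=1-\ell_1/N-s-K/N$ and $B=s+\ell_1/N$, rather than merely $2x_1+y_1\le1$. This is a genuinely multi-variable problem, not a one-variable one.
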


\begin{proof}
As mentioned in the {\em sketch of proof}, we optimize $\rp(w)$ in two cases.

\noindent
\textbf{Case 1: \(A_1\le N/2\).}\\
Then one has
\(
\frac{\ell_i}{N}+\frac{m_i}{N}=\frac{A_i}{N}\le \frac{A_1}{N}\le\frac12.
\)
Moreover, from Equation~\ref{sum of M}, $\sum_{i=1}^t \frac{m_i}{N}\le 1$. Therefore, by the definition of \(\mathcal P\) and Lemma~\ref{lem:packing-function},
\[
\sum_{i=1}^t F(\frac{\ell_i}{N},\frac{m_i}{N})
\le
\mathcal P\left(\frac12,1\right)\le \frac18.
\]

\medskip

\noindent
\textbf{Case 2: \(A_1>N/2\).}
The contribution of $D_1(w)$ is
\[
\rp_1(w)=F(\ell_1,m_1)+O(n)=N^2F(\frac{\ell_1}{N},\frac{m_1}{N})+O(n).
\]\\
Now we compute $\rp_3(w)$. In this case, for all $[u_i] \in D_3(w)$, $$\ell_1 \le \ell_i \le A_i \le A_1.$$ Thus, $$m_i=|I_i \cap I_1|.$$ Moreover, from the definition of the function $F$, one has $$F(\frac{\ell}{N},\frac{m}{N})\le \frac12 \left(\frac{m}{N}\right)^2.$$
Define
\[
K:=\sum_{[u_i]\in D_3}|I_i\cap I_1|.
\]
One has
\[
\sum_{[u_i]\in D_3}F(\frac{\ell_i}{N},\frac{m_i}{N})
\le
\frac12\sum_{[u_i]\in D_3}\left(\frac{m_i}{N}\right)^2
=
\frac12\sum_{[u_i]\in D_3}\left(\frac{|I_i\cap I_1|}{N}\right)^2
\le
\frac12\left(\frac{K}{N}\right)^2.
\]
Next we compute $\rp_2(w)$. We claim
\[
\sum_{[u_i]\in D_2}F(\frac{\ell_i}{N},\frac{m_i}{N})
\le
\mathcal P(\frac{N-\ell_1-m_1-K}{N},\frac{N-m_1-K}{N}).
\]
To verify this, it is enough to check
\begin{equation}\label{eq:cond-li-mi}
    \frac{\ell_i+m_i}{N}
    \le
    \frac{N-\ell_1-m_1-K}{N},\ \forall [u_i]\in D_2,
\end{equation}

\begin{equation}\label{eq:cond-sum-mi}
    \frac1N\sum_{[u_i]\in D_2} m_i
    \le
    \frac{N-m_1-K}{N},
\end{equation}

\begin{equation}\label{eq:cond-half}
    0
    \le
    \frac{N-\ell_1-m_1-K}{N}
    \le
    \frac{1}{2},
\end{equation}

\begin{equation}\label{eq:cond-comparison}
    \frac{N-\ell_1-m_1-K}{N}
    \le
    \frac{N-m_1-K}{N}
    \le
    2\,\frac{N-\ell_1-m_1-K}{N}.
\end{equation}
We first verify Inequality~\ref{eq:cond-li-mi}. By Equation~\ref{lem9eq2}, we have $K\leq N-2\ell_1-m_1$.
For any $[u_i]\in D_2$, one has $A_i\leq A_1$ and $\ell_i<\ell_1$. If $A_i\leq \ell_1$, then
$A_i\leq \ell_1\leq N-\ell_1-m_1-K$, where the last Inequality follows from Equation~\ref{lem9eq2} also. If $A_i>\ell_1$, then
$I_i\cap I_1=\{\ell_1,\ell_1+1,\ldots,A_i-1\}$. Thus
$A_i-\ell_1=|I_i\cap I_1|\leq N-2\ell_1-m_1-K$, and again $A_i\leq N-\ell_1-m_1-K$. Therefore, in all cases, Inequality~\ref{eq:cond-li-mi} holds.\\
Next, if $[u_i]\in D_3$, then $\ell_i\geq \ell_1$ and $A_i\leq A_1$, hence $I_i\subseteq I_1$ and $|I_i\cap I_1|=m_i$. Therefore $K=\sum_{[u_i]\in D_3}m_i$. From Equation~\ref{sum of M}, we get
\[
\frac1N\sum_{[u_i]\in D_2}m_i
\leq \frac{N-m_1-\sum_{[u_i]\in D_3}m_i}{N}
= \frac{N-m_1-K}{N},
\]
which proves Inequality~\ref{eq:cond-sum-mi}.\\
Third, from Equation~\ref{lem9eq2}, we have $N-\ell_1-m_1-K\geq \ell_1>0$. Since we are in Case 2, $A_1=\ell_1+m_1>N/2$, and hence $N-\ell_1-m_1-K<N/2$. This proves inequality~\ref{eq:cond-half}.\\
Lastly, the right inequality of \ref{eq:cond-comparison} is equivalent to $2\ell_1+m_1+K\leq N$, which follows again from Equation~\ref{lem9eq2}.\\
Combining the three estimates gives
\[
\sum_{i=1}^t F(\frac{\ell_i}{N},\frac{m_i}{N})
\le
F(\frac{\ell_1}{N},\frac{m_1}{N})+\mathcal P(\frac{N-\ell_1-m_1-K}{N},\frac{N-m_1-K}{N})+\frac12(\frac{K}{N})^2
.
\]
By Lemma~\ref{lem:one-eighth-reduction},
\[
N^2\sum_{i=1}^tF(\frac{\ell_i}{N},\frac{m_i}{N})
\le
\frac18N^2.
\]
Finally it is enough to check the two conditions of Lemma~\ref{lem:one-eighth-reduction}, that is,

\begin{equation}\label{cond1}
0< \frac{\ell_1}{N}\le \frac{N-\ell_1-m_1-K}{N}\le \frac12,   
\end{equation}

\begin{equation}\label{cond2}
0\le \frac{K}{N}\le\frac12-\frac{N-\ell_1-m_1-K}{N}.
\end{equation}
The inequality $0\le\frac{N-\ell_1-m_1-K}{N}\le\frac{1}{2}$ is from Equation~\ref{eq:cond-half}; $\frac{\ell_1}{N}\le \frac{N-\ell_1-m_1-K}{N}$ is obtained by Equation~\ref{lem9eq2} in Lemma~\ref{lem:pivot-overlap} and Inequality~\ref{cond2} by $A_1>\frac N2$. \qed
\end{proof}

We now prove the two elementary optimization lemmas used in the proof of
Theorem~\ref{thm:rational-power-eighth}.

\begin{lem}\label{lem:packing-function}
Let \(0<s\le 1/2\) and \(s\le B\le 2s\). Then
\[
\mathcal P(s,B)\le s^2V(B/s),
\]
where, for \(1\le T\le 2\),
\[
V(T):=
\begin{cases}
\dfrac{(2-T)(3T-2)}{4},
& 1\le T\le \dfrac43,\\[8pt]
\dfrac32T^2-4T+3,
& \dfrac43\le T\le \dfrac32,\\[8pt]
\dfrac12(3-T)(T-1),
& \dfrac32\le T\le 2.
\end{cases}
\]
In particular,
\[
\mathcal P\left(\frac12,1\right)\le \frac18.
\]
\end{lem}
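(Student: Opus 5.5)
The plan is to reduce $\mathcal P(s,B)$ to a one-variable packing problem by optimizing out the $x_j$'s first, and then to solve that packing problem by an exchange argument.

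\textbf{Step 1 (eliminating $x_j$).} Fix $y=y_j\in[0,s]$ and maximize $F(x,y)$ over $0\le x\le s-y$.
\begin{itemize}
\item For $x\ge y$ one has $F=\frac12y^2$.
\item For $x\le y$, the function $x\mapsto (y-\frac x2)x$ is increasing on $[0,y]$, with value $\frac12 y^2$ at $x=y$.
\end{itemize}
Hence the maximum is $g(y)=\frac12y^2$ when $y\le s/2$. When $y\ge s/2$ the constraint $x\le s-y\le y$ binds, and the maximum is $g(y)=\frac12(3y-s)(s-y)$. Substituting $y=st$ gives $g(y)=s^2h(t)$, where
\[
h(t)=\tfrac12t^2 \quad (0\le t\le \tfrac12),\qquad h(t)=\tfrac12(3t-1)(1-t) \quad (\tfrac12\le t\le 1).
\]
Therefore
\[
\mathcal P(s,B)\le s^2\,\max\Bigl\{\sum_j h(t_j)\ :\ t_j\in[0,1],\ \sum_j t_j\le T\Bigr\},\qquad T=B/s\in[1,2].
\]
It then suffices to show that this maximum is at most $V(T)$.

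\textbf{Step 2 (structure of a maximizer).} The function $h$ is continuous and increasing on $[0,2/3]$. It is convex on $[0,\frac12]$ and concave on $[\frac12,1]$, and it attains its maximum $\frac16$ at $t=\frac23$. I will use standard exchange moves on a maximizer.
\begin{itemize}
\item Two values in the convex part $(0,\frac12)$ can be spread apart without decreasing the sum: move mass from the smaller to the larger until one reaches $0$ or $\frac12$. So at most one $t_j$ lies in $(0,\frac12)$.
\item Values in the concave part should be equalized, because Jensen's inequality increases the sum.
\item No value needs to exceed $\frac23$.
\item Any value in $(0,\frac12)$ is small and can be compared with increasing a concave-part value, using the tangent-line bound $h(t)\le h(a)+h'(a)(t-a)$ on the concave arc.
\end{itemize}
Since $\sum t_j\le 2$ and each useful piece has $t_j\ge\frac12$, at most four nonzero pieces remain. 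The candidates are $k$ equal pieces of size $\min(T/k,\frac23)$ in the concave region, possibly together with one leftover piece.

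\textbf{Step 3 (case computation).} The three regimes of $V$ should arise as follows.
\begin{itemize}
\item Two equal pieces $T/2$ give $2h(T/2)=\frac14(2-T)(3T-2)$, which is the first branch.
\item Once $T/2$ passes the optimum in a way that makes three pieces better, three equal pieces $T/3\in[\frac12,\frac23]$ give $3h(T/3)=\frac12(T-1)(3-T)$. This is the third branch, and it equals $\frac12$ at $T=2$.
\item The middle branch $\frac32T^2-4T+3$ should come from a mixed configuration: pieces in the concave region together with one piece at the boundary $\frac12$ of the convex region, optimized by a Lagrange condition.
\end{itemize}
I would verify that each branch dominates the competing configurations on its interval; the breakpoints $\frac43$ and $\frac32$ are where the candidates cross. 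The main obstacle is this bookkeeping: proving rigorously that the exchange moves leave only the listed candidates, and identifying the configuration behind the middle branch. For the final claim, take $s=\frac12$, $B=1$, so $T=2$. Three pieces $t=\frac23$ give $\frac12$, and the upper bound uses only the third branch: $\mathcal P(\frac12,1)\le \frac14\cdot V(2)=\frac14\cdot\frac12=\frac18$. For this special case it also suffices to check directly that $h(t)\le \frac14 t$ on $[0,1]$, which holds since $\frac18 t\le \frac14 t$ on the convex part, and $\frac12(3t-1)(1-t)\le\frac t4$ reduces to $(3t-2)^2\ge0$. Summing gives $\sum_j h(t_j)\le \frac T4=\frac12$ with no case analysis.
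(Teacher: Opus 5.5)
\textbf{Verdict: there is a genuine gap.} The general bound $\mathcal P(s,B)\le s^2V(B/s)$ is never actually derived, your proposed source of the middle branch is wrong, and the direct check for $\mathcal P(\frac12,1)\le\frac18$ is false.

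\textbf{The general bound.} Your Step 1 and the first two exchange moves match the paper's reduction: at most one $t_j$ in $(0,\frac12)$, then Jensen on the pieces in $[\frac12,1]$. Step 3 is where $V$ actually comes from, and it is not carried out.

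The middle branch does not involve a piece pinned at $\frac12$. It is realized by two pieces of size $2-T\in[\frac12,\frac23]$ and one small piece of size $3T-4\in[0,\frac12]$. This is where the stationarity condition $h'(r)=r=2-3t=h'(t)$ meets $2t+r=T$, and indeed $2h(2-T)+\frac12(3T-4)^2=\frac32T^2-4T+3$. The small piece equals $\frac12$ only at $T=\frac32$.

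For the same reason, your fourth bullet (absorbing the small piece into a concave one via a tangent line) points the wrong way: on $(\frac43,\frac32)$ the optimum genuinely contains a piece in $(0,\frac12)$. What is needed, and what the paper does, is to keep that piece $r$ as a free variable and maximize $h(r)+(k-1)h\bigl(\frac{T-r}{k-1}\bigr)$ for $k=2,3,4$.
\begin{itemize}
\item For $k=3$ the derivative in $r$ is $\frac{3T}{2}-2-\frac r2$. So the maximizer is $r=3T-4$ clipped to $[0,\frac12]$. This gives the first branch for $T\le\frac43$ and the middle branch on $[\frac43,\frac32]$.
\item For $k=4$ the derivative is $T-2\le 0$, so $r=0$, which gives the third branch.
\end{itemize}
Without this computation, ``verify that each branch dominates'' is the entire content of the lemma.

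\textbf{The special case.} Your fallback argument is false, so the special case also rests on the unproved general bound. The inequality $\frac12(3t-1)(1-t)\le\frac t4$ is equivalent to $6t^2-7t+2\ge0$, i.e.\ $(2t-1)(3t-2)\ge0$, not $(3t-2)^2\ge0$. It fails on $(\frac12,\frac23)$; for example $h(0.6)=0.16>0.15$.

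No linear bound can rescue it. Since $\sup_t h(t)/t=2-\sqrt3>\frac14$ (attained at $t=1/\sqrt3$), the best such bound only yields $\sum_j h(t_j)\le 4-2\sqrt3>\frac12$. The value $\frac12$ at $T=2$ is attained along the whole family $t_1=t_2=t_3=t\in[\frac12,\frac23]$, $t_4=2-3t$, because $3h(t)+\frac12(2-3t)^2\equiv\frac12$. So any proof must use the coupling between the large pieces and the single small one.

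After your reduction this is easy. Take $k$ equal pieces $t\in[\frac12,\frac23]$ and one piece $r\in[0,\frac12)$ with $kt+r\le 2$.
\begin{itemize}
\item If $k\le 2$, the value is at most $\frac13+\frac18<\frac12$.
\item If $k=3$, then $r\le 2-3t$, so the value is at most $3h(t)+\frac12(2-3t)^2=\frac12$.
\item If $k=4$, this forces $t=\frac12$ and $r=0$, giving value $\frac12$.
\end{itemize}
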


\begin{proof}
Let $T:=\frac{B}{s}$, then \(1\le T\le 2\).

Fix one pair \((x,y)\) with $x+y\le s$. Write
\[
y=rs,
\qquad 0\le r\le 1.
\]
For a fixed \(y\), the largest possible value of \(F(x,y)\), under the condition
\(0\le x\le s-y\), is $s^2\varphi(r)$, where
\[
\varphi(r):=
\begin{cases}
\dfrac12r^2, & 0\le r\le \dfrac12,\\[6pt]
\dfrac12(1-r)(3r-1), & \dfrac12\le r\le 1.
\end{cases}
\]
Indeed, if \(r\le 1/2\), then one can choose \(x\ge y\), and the maximum is
\[
\frac12y^2=s^2\frac12r^2.
\]
If \(r\ge 1/2\), then \(x\le s-y\le y\), and
\[
F(x,y)=xy-\frac12x^2
\]
is increasing in \(x\) on \(0\le x\le s-y\).  Hence the maximum is attained at
\(x=s-y\), and equals
\[
(s-y)y-\frac12(s-y)^2
=
s^2\frac12(1-r)(3r-1).
\]
Therefore, for every admissible family \((x_j,y_j)\), if $r_j:=\frac{y_j}{s}$, then
\[
F(x_j,y_j)\le s^2\varphi(r_j),\quad\sum_j r_j\le T.
\]
It remains to maximize $\sum_j\varphi(r_j)$ under the constraints
\[
0\le r_j\le 1,
\qquad
\sum_j r_j\le T.
\]
Let $r=T-\sum_jr_j$, then $\max \sum_j\varphi(r_j) \leq \max (\sum_j\varphi(r_j)+\varphi(r))$. We can now compute $ \max \sum_j\varphi(r_j)$ under the restrictions:
\[
0\le r_j\le 1,
\qquad
\sum_j r_j= T.
\]
We remark that for any pair $0 \le r_i, r_j<\frac{1}{2}$ with $r_i \ne r_j$, if $r_i+r_j \leq \frac{1}{2}$, then $$\varphi(r_i)+\varphi(r_j) \leq \varphi(r_i+r_j);$$ if $r_i+r_j > \frac{1}{2}$, then $$\varphi(r_i)+\varphi(r_j) \leq \varphi(\frac{1}{2})+ \varphi(r_i+r_j-\frac{1}{2}).$$ Thus, one can reduce the number of variables in objective function $\sum_j\varphi(r_j)$ into at most $4$. That is, $\max \sum_j\varphi(r_j) \le \max \sum_{j=1}^k\varphi(r'_j)$ under the restrictions: 
\[
0\le r'_1 \le r'_2 \le \ldots \le r'_k\le 1
,\qquad
\sum_{j=1}^k r'_j= T
,\qquad r'_2 \ge \frac{1}{2}.
\]
Moreover, since $\varphi$ is concave in $[\frac{1}{2},1]$, 
$$\sum_{j=2}^k\varphi(r'_j) \leq (k-1)\varphi(\frac{1}{k-1}\sum_{j=2}^kr'_j).$$
Hence, $$\sum_{j=1}^k\varphi(r'_j) \leq \varphi(r'_1)+(k-1)\varphi(\frac{T-r'_1}{k-1}).$$
For $k=2,3,4$, a direct one-variable calculation gives the following three possible maxima:
\[
2\varphi(T/2)
\quad
\text{for }
1\le T\le \frac43,
\]
\[
2\varphi(2-T)+\frac12(3T-4)^2
\quad
\text{for }
\frac43\le T\le \frac32,
\]
and
\[
3\varphi(T/3)
\quad
\text{for }
\frac32\le T\le 2.
\]
These are exactly
\[
V(T)=
\begin{cases}
\dfrac{(2-T)(3T-2)}{4},
& 1\le T\le \dfrac43,\\[8pt]
\dfrac32T^2-4T+3,
& \dfrac43\le T\le \dfrac32,\\[8pt]
\dfrac12(3-T)(T-1),
& \dfrac32\le T\le 2.
\end{cases}
\]\qed
\end{proof}

\begin{lem}\label{lem:one-eighth-reduction}
Let \(0<a\le s\le 1/2\), \(0\le y\le 1/2-s\).
Then
\[
F(a,1-a-s-y)+\mathcal P(s,s+a)+\frac12y^2\le \frac18.
\]
\end{lem}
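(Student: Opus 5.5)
The plan is to bound each of the three terms by an explicit piecewise polynomial and then maximize over a small polygon. Set $m:=1-a-s-y$ and $T:=\frac{s+a}{s}=1+\frac as$. Since $0<a\le s$, we have $1<T\le 2$, and also $s\le s+a\le 2s$. Lemma~\ref{lem:packing-function} therefore applies and gives
\[
\mathcal P(s,s+a)\le s^2V(1+a/s)=:\Phi(a,s).
\]
This term does not depend on $y$. The constraint $y\le \frac12-s$ gives $m\ge \frac12-a\ge 0$, so $F(a,m)$ lies in the domain of $F$ and the quantity to bound is
\[
G(a,s,y):=F(a,1-a-s-y)+\Phi(a,s)+\tfrac12y^2 .
\]
The goal is $G\le\frac18$ on the region $0<a\le s\le\frac12$, $0\le y\le\frac12-s$.

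\textbf{Step 1: eliminate $y$.} For fixed $a$, the map $m\mapsto F(a,m)$ is convex. It equals $\frac12m^2$ for $m<a$ and the linear function $am-\frac12a^2$ for $m\ge a$, and the two pieces match in value and slope at $m=a$. Composing with the affine map $y\mapsto m$ and adding the convex term $\frac12y^2$ keeps $G(a,s,\cdot)$ convex in $y$. Hence its maximum on $[0,\frac12-s]$ is attained at an endpoint, and it suffices to treat two cases.

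In the case $y=0$ we have $m=1-a-s$. In the case $y=\frac12-s$ we have $m=\frac12-a$, and the extra term $\frac12(\frac12-s)^2$ appears. In both cases $m\ge a$ as soon as $a\le\frac14$ (resp.\ $a\le\frac{1-s}{2}$), so generically $F(a,m)=a\bigl(m-\frac a2\bigr)$. The complementary range, where $m<a$ and $F=\frac12m^2$, will be handled separately; it only occurs for $a$ close to $s\approx\frac12$.

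\textbf{Step 2: the two-variable problem.} Substitute $t:=a/s\in(0,1]$, so that $a=ts$ and $T=1+t$. Then $\Phi=s^2V(1+t)$, where $V$ is one of the three quadratics of Lemma~\ref{lem:packing-function}, according as $t\le\frac13$, $\frac13\le t\le\frac12$, or $t\ge\frac12$.

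In each of the resulting cells (endpoint choice for $y$, branch of $F$, branch of $V$), $G$ becomes an explicit polynomial of degree $\le 2$ in $s$ for fixed $t$, defined on $0<s\le\frac12$. I will maximize it in $s$, which is either a critical point or the endpoint $s=\frac12$. This leaves a one-variable function of $t$ on a subinterval of $[0,1]$, which I bound by direct calculus.

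Two sanity checks guide this. At $s=\frac12$ both endpoints of Step~1 coincide, because then $y=0$ is forced. At $a\to0$ the value is $\frac1{16}+O(a)$, comfortably below $\frac18$. So I expect the extremum to sit in the interior, around the middle branch of $V$, and the inequality should come out with equality or near-equality only at some specific point, which I would locate explicitly.

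\textbf{Main obstacle.} The hard part is Step~2. The piecewise structure produces roughly a dozen cells, and in several of them the $s$-quadratic has a negative leading coefficient, so the interior critical point must actually be computed and substituted. The resulting rational functions of $t$ then have to be shown to be $\le\frac18$.

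I would organize this by first proving that $\partial G/\partial s\ge0$ wherever possible. That reduces most cells to the face $s=\frac12$, where $y=0$, $a\le\frac12$, and a single-variable check remains: $F(a,\frac12-a)+\frac14V(1+2a)\le\frac18$. Only the cells where monotonicity fails would then need genuine critical-point analysis.
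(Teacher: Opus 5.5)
Your reduction is correct, but the route differs from the paper's. The paper substitutes $S=s+y$, so that $F(a,1-a-S)$ no longer depends on $s$. For fixed $(a,S)$ it then maximizes $s^2V(1+a/s)+\frac12(S-s)^2$ over $a\le s\le S$ by piecewise convexity. That leaves the candidates $U_1,\dots,U_4$ (the values at $s=a,2a,3a,S$) and a two-variable check in $(a,S)$, which the paper only asserts.

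You keep $(a,s)$ and remove $y$ by convexity; your argument that $F(a,\cdot)$ is $C^1$ and convex is right. You then freeze $t=a/s$, and that choice is what creates your ``main obstacle''. With $t$ frozen, some cells are concave in $s$. For instance, on $y=0$ with $t\ge\frac12$ you get $G=ts-2t^2s^2$, whose interior maximum is exactly $\frac18$ at $s=\frac1{4t}$, i.e.\ along $a=\frac14$.

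If you freeze $a$ instead, everything is convex in $s$:
\begin{itemize}
\item $s\mapsto s^2V(1+a/s)$ has derivative $a$, $s-a$, $\frac12(s+a)$ on the pieces $a\le s\le 2a$, $2a\le s\le 3a$, $s\ge 3a$. This derivative is continuous and nondecreasing.
\item $s\mapsto F(a,1-a-s-y)$ and $s\mapsto\frac12(\frac12-s)^2$ are convex as well.
\end{itemize}
So on both faces $y=0$ and $y=\frac12-s$ the maximum is at $s\in\{a,\frac12\}$, and only three one-variable checks remain:
\begin{itemize}
\item $G(a,a,0)$ equals $a-2a^2$ for $a\le\frac13$, and $\frac12(1-2a)^2+\frac12a^2$ otherwise.
\item $G(a,a,\frac12-a)$ equals $\frac18-\frac12a^2$ for $a\le\frac14$, and $(\frac12-a)^2+\frac12a^2$ otherwise.
\item $G(a,\frac12,0)$ equals $\frac1{16}+\frac34a-\frac94a^2$ for $a\le\frac16$, and is identically $\frac18$ on $[\frac16,\frac12]$.
\end{itemize}
All of these are at most $\frac18$. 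This finishes the verification that both your plan and the paper leave to ``direct calculation''. The same global convexity also shows that the paper's $U_2,U_3$ are superfluous.

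Some of your guiding heuristics are wrong, and one affects your recipe.
\begin{itemize}
\item The value near $a=0$ is not comfortably below $\frac18$. Along $s=a$, $y=\frac12-a$ it equals $\frac18-\frac12a^2$. Your $\frac1{16}$ is only the value on the face $s=\frac12$.
\item For each fixed $t$ on the face $y=\frac12-s$, one has $G\to\frac18$ as $s\to0^+$. So your candidate list ``critical point or $s=\frac12$'' must also include $s\to0^+$. It must also include the points $s=\frac1{1+2t}$ (on $y=0$) and $s=\frac1{4t}$ (on $y=\frac12-s$), where the branch of $F$ switches.
\item Equality is not isolated. $G=\frac18$ on the whole segments $\{s=\frac12,\ y=0,\ \frac16\le a\le\frac12\}$ and $\{a=\frac14,\ y=0,\ \frac14\le s\le\frac12\}$. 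Expect cells where $G$ is identically $\frac18$ rather than a unique extremal point.
\end{itemize}
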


\begin{proof}
By Lemma~\ref{lem:packing-function},
\[
\mathcal P(s,s+a)\le s^2V\left(1+\frac{a}{s}\right).
\]
For brevity, let
\[
W(a,s):=s^2V\left(1+\frac{a}{s}\right).
\]
From the explicit formula for \(V\), we get
\[
W(a,s)=
\begin{cases}
\dfrac14s^2+\dfrac12as-\dfrac34a^2,
& 0\le \frac{a}{s}\le \frac{1}{3},\\[8pt]
\dfrac12s^2-as+\dfrac32a^2,
&\frac{1}{3}\le \frac{a}{s}\le \frac{1}{2},\\[8pt]
as-\dfrac12a^2,
& \frac{1}{2}\le \frac{a}{s}\le 1.
\end{cases}
\]
It is enough to prove
\[
F(a,1-a-s-y)+W(a,s)+\frac12y^2\le \frac18.
\]
Let
\(S=s+y.\)
Then
\[
0<a\le s\le S\le \frac12.
\]
For fixed \(a\) and \(S\), we maximize $W(a,s)+\frac12(S-s)^2$ over $a\le s\le S$. Then 
one has
\begin{align*}
W(a,s)+\frac12(S-s)^2\le\max
\begin{cases}
U_1(a,S)=\frac12a^2+\frac12(S-a)^2,\quad &0\le a\le S\le \dfrac12,\\ 
U_2(a,S)=\frac32a^2+\frac12(S-2a)^2,\quad &0\le 2a\le S\le \dfrac12,\\
U_3(a,S)=3a^2+\frac12(S-3a)^2,\quad &0\le 3a\le S\le \dfrac12,\\
U_4(a,S)=W(a,S), \quad &0\le a\le S\le \dfrac12.
\end{cases}
\end{align*}
Thus it remains to prove
\[
F(a,1-a-S)+U_j(a,S)\le \frac18,\quad j=1,2,3,4.
\]
Recall that $$F(a,1-a-S)=\begin{cases}
    \frac{(1-a-S)^2}{2},& \; \text{if $1-a-S\le a$},\\
    a(1-a-S- \frac{a}{2}),& \; \text{if $1-a-S> a$}.
\end{cases}$$
One has
$$\max_{0<a\le S \le \frac{1}{2} } F(a,1-a-S)+U_j(a,S)=\begin{cases}
    \max_{i=1,2,3,4} \{U_i+F(a,1-a-S)\},& \text{if $3a \le S<\frac{1}{2}$},\\
    \max_{i=1,2,4} \{U_i+F(a,1-a-S)\},& \text{if $2a \le S<3a$},\\
    \max_{i=1,2} \{U_i+F(a,1-a-S)\},& \text{if $a \le S<2a$}.\\
\end{cases}$$
One can check in all cases$$\max_{0<a\le S \le \frac{1}{2} } F(a,1-a-S)+U_j(a,S) \le \frac{1}{8}.$$ 
Therefore
\[
F(a,1-a-s-y)+W(a,s)+\frac12y^2\le \frac18.
\] \qed
\end{proof}

\paragraph{Extremal profiles allowed by the relaxation.}
The preceding proof also indicates where the constant \(1/8\) comes from.

In Case 1, we achieve the maximum value if the two following conditions hold: 
\[
A_i=\ell_i+m_i\approx \frac N2 \; \text{for all $[u_i] \in D(w)$}
\]
and
\[
\sum_i m_i\approx N.
\]
Two typical equality profiles are:
\[
(\ell_i,m_i)\approx \left(\frac N6,\frac N3\right)
\quad\text{for three classes,}
\]
or
\[
(\ell_i,m_i)\approx \left(\frac N4,\frac N4\right)
\quad\text{for four classes.}
\]

In Case 2,  we achieve the maximum value when: 
\[
(\ell_1,m_1)\approx \left(\frac N4,\frac N2\right)
\]
and 
\[
D_3=\emptyset, \quad\text{and for any }[u_i]\in D_2,\  A_i+\ell_i \approx \ell_1.
\]
A typical relaxed profile is
\[
(\ell_i,m_i)\approx \left(\frac N{12},\frac N6\right)
\quad\text{for three classes,}
\]
or
\[
(\ell_i,m_i)\approx \left(\frac N8,\frac N8\right)
\quad\text{for four classes.}
\]

\section{A lower bound of $\rp(n)$}

In this section, we give the first estimation of lower bound of $\rp(n)$ by proving that there exist infinitely many words $w$ such that $\rp(w)\ge \frac{|w|^2}{9}-O(|w|)$. For $n \in \N$, define 
$$w_n=(a^nba^{n-1}b)^4a^{n-1}.$$
One can check $|w_n|=9n+3.$ Let us compute $\rp(w)$.\\
Since $a^n \in \fac(w_n)$, 
$$\qcl_{w_n}(a)=\{a^k|2 \le k \le n \}.$$
Since $v_1=(a^{n-1}ba)^2a^{n-2} \in \fac(w_n)$, 
$$\qcl_{w_n}(a^{n-1}ba)=\{v_1[i,j] |1 \le i <j \le 3n, j-i \geq 2n+1 \}.$$
Since $v_2=(a^{n-1}b)^2a^{n-1} \in \fac(w_n)$, 
$$\qcl_{w_n}(a^{n-1}b)=\{v_2[i,j] |1 \le i <j \le 3n-1, j-i \geq 2n-1 \}.$$
Moreover,
\begin{align*}
 \qcl_{w_n}(a^{n-1}ba^nb)
 &=
 \{w_n[i,j]|1\le i\le 2n+1,4n+2\le j\le 6n+2,j-i\ge 4n+1\} \\
 &\cup
 \{w_n[i,j]|1\le i\le 2n+1,6n+3\le j\le 9n+3\}.
\end{align*}
Thus, \begin{align*}
    \rp(w_n)&=|\qcl_{w_n}(a)|+|\qcl_{w_n}(a^{n-1}ba)|+|\qcl_{w_n}(a^{n-1}b)|\\
    &+|\qcl_{w_n}(a^{n-1}ba^nb)|\\
    &=(n-1)+\frac{n(n-1)}{2}+\frac{n(n+1)}{2}\\
    &+\frac{(2n+1)(2n+2)}{2}+(2n+1)(3n+1)\\
    &=9n^2+9n+1=\frac{|w|^2}{9}-O(|w|).
    \end{align*}

\section{Conclusion}
In this work, we prove that $\rp(n)$ the maximum number of distinct rational powers in a word of length $n$ satisfies $\frac{n^2}{9}-O(n) \le \rp(n) \le \frac{n^2}{8}$. However, a computer simulation suggests that $\rp(n) \sim 0.115n^2$. Thus, both upper bound and lower bound of $\rp(n)$ may be improved.

\bibliographystyle{splncs03}
\bibliography{biblio}

\begin{thebibliography}{1}
\providecommand{\url}[1]{\texttt{#1}}
\providecommand{\urlprefix}{URL }

\bibitem{sq}
Brlek, S., Li, S.: On the number of squares in a finite word. Combinatorial Theory  5(1), ~3 (2025)

\bibitem{CMFS10}
Crochemore, M., Fazekas, S.Z., Iliopoulos, C.S., Jayasekera, I.: Number of occurrences of powers in strings. International Journal of Foundations of Computer Science  21(4),  535--547 (2010)

\bibitem{fiwi}
Fine, N.J., Wilf, H.S.: Uniqueness theorems for periodic functions. Proceedings of the American Mathematical Society  16(1),  109--114 (1965)

\bibitem{FraenkelS98}
Fraenkel, A.S., Simpson, J.: How many squares can a string contain? J. Comb. Theory, Ser. {A}  82(1),  112--120 (1998)

\bibitem{kpower}
Li, S., Pachocki, J., Radoszewski, J.: A note on the maximum number of {$k$}-powers in a finite word. The Electronic Journal of Combinatorics  31(3),  P3.14 (2024), \url{https://doi.org/10.37236/11270}

\bibitem{lothaire3}
Lothaire, M.: Applied Combinatorics on Words. Cambridge University Press, Cambridge (2005)

\bibitem{OS24}
Parshina, O., Puzynina, S.: Finite and infinite closed-rich words. Theoretical Computer Science  984,  114315 (2024)

\bibitem{Rauzy}
Rauzy, G.: Suites à termes dans un alphabet fini. Seminar on Number Theory (Univervité de Bordeaux I, Talence)  25,  1--16 (1983)

\bibitem{Thue1906}
Thue, A.: {\"U}ber unendliche zeichenreihen. Norske Videnskabers Selskabs Skrifter Mat.-Nat. Kl.  7,  1--22 (1906)

\end{thebibliography}

\end{document}